\documentclass{amsart}
\usepackage{amsfonts,amssymb,amsmath,amsthm}
\usepackage{url}
\usepackage{enumerate}
\usepackage{graphicx}
\usepackage{hyperref}
\usepackage{graphicx}
\usepackage{siunitx}
\usepackage{float}
\restylefloat{figure}
\urlstyle{sf}
\newtheorem{theorem}{Theorem}[section]
\newtheorem{lemma}[theorem]{Lemma}

\author{Uuganbaatar Ninjbat}
\address{
Mathematics Department\\
The National University of Mongolia\\
Ulaanbaatar, Mongolia}
\email{uugnaa.ninjbat@gmail.com}
\thanks{To appear in Hitotsub. J. Econ. 59 (1).}

\keywords{The Gibbard-Satterthwaite theorem, Field extension lemma, Group contraction lemma}
\subjclass[2010]{91B12, 91B14}
\begin{document}
\title[A Missing Proof of The Gibbard-Satterthwaite Theorem]{A Missing Proof of The Gibbard-Satterthwaite Theorem}
\begin{abstract}
A short and direct proof of the Gibbard-Satterthwaite theorem \`{a} la Amartya Sen's proof of Arrow's impossibility theorem is given.
\end{abstract}
\maketitle

\section{Introduction}
\label{intro}
Several different approaches are used in proving social choice impossibility theorems, such as
\begin{itemize}
\item By induction (see e.g. \cite{satt}, \cite{sen01}, \cite{nin12-b} and \cite{sven}), 
\item By contradiction (see e.g. \cite{fish} and \cite{suz}), 
\item By the pivotal voter approach (for recent modifications, see \cite{yu}, \cite{fey}),
\item By other known impossibility theorems (see e.g. \cite{gibb}), and
\item By using computer assistance (see e.g.  \cite{tak}, \cite{tang-ling}).
\end{itemize}
Among these, proofs in the style of \cite{arr} and \cite{sen86} have a distinct feature to treat the impossibility result under consideration as a self contained mathematical structure and deduce the result from its setting without referring to an external mathematical device; see Sen's discussions in \cite{sen14}. In Sect. \ref{proof}, we give a short and direct proof for the Gibbard-Satterthwaite theorem (\cite{gibb}, \cite{satt}) using this approach.

\section{The preliminaries}
\label{setting} $A$ denotes the set of alternatives with $|A|\geq 3$
elements, and $X$ denotes the set of strict linear orders (i.e. complete, transitive and asymmetric binary relations) on $A$. Let there be $N$ individuals in the group $I=\{1,2,...,N\}$. 
A function $f:X^{N}\rightarrow A$ is called as a social choice function. A member $x=(x_{1},...,x_{N})$ of $X^{N}$ is called as a profile, and its $i^{\prime}$th component, $x_{i}$, is called the individual $i^{\prime}$s ranking. For any $x\in X^{N}$ and $i\in I$, let $(x_{i}^{\prime },x_{-i})\in X^{N}$ denote the profile
that has $x_{i}^{\prime }\in X$ in its $i^{\prime}$th component instead of $
x_{i}\in X,$ and otherwise the same as $x\in X^{N}$. When $a\in A$ is ranked above $b\in A$ according
to $x_{i}$ we write $a\succ _{x_{i}}b$. 

A group of individuals $G\subseteq I$ is \textit{decisive over} $a\in A$ if $f(x)=a$ for all $x\in X^{N}$ such that $a$ is on the top of $x_{i}$ for all $i\in G$. $G\subseteq I$ is \textit{decisive} if it is decisive over all $a\in A$. We say that $f:X^{N}\rightarrow A$ is \textit{unanimous} (UNM) if $I$ is decisive. It is 
\textit{manipulable} (MNP)\ at $x\in X^{N}$ by $i\in I$ via $x_{i}^{\prime}\in X$ if $f(x_{i}^{\prime },x_{-i})\succ _{x_{i}}f(x)$. It is \textit{
strategy proof} (STP) if it is not manipulable. Finally, it is \textit{dictatorial} (DT) if there is a decisive group consisting of a single individual. 
The following result is known as the Gibbard-Satterthwaite theorem:

\begin{theorem}[\cite{gibb}, \cite{satt}]
\label{GS} $f:X^{N}\rightarrow A$ is
UNM and STP if and only if it is DT.
\end{theorem}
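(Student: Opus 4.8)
The plan is to prove the nontrivial ``only if'' direction---that UNM and STP force DT---by a decisiveness argument in the spirit of Sen, resting on a field extension lemma and a group contraction lemma. The \emph{if} direction is routine and I would dispose of it first: if $\{i\}$ is decisive then $f(x)$ is always the top alternative of $x_i$, which is unanimous (when all of $I$, in particular $i$, tops $a$, the output is $a$) and strategy proof (the dictator already receives his top choice and so cannot gain, while any other individual cannot change the output at all).

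For the \emph{only if} direction, the first step is to extract a monotonicity property from STP. I would prove the following (MON): if $f(x)=a$ and $x'$ is a profile in which no individual lowers $a$ relative to any alternative that $a$ beat under $x$ (that is, $a\succ_{x_i}b$ implies $a\succ_{x'_i}b$ for all $i$ and all $b$), then $f(x')=a$. This follows by changing one coordinate at a time: if the current output is $a$ and $i$ switches from $x_i$ to $x'_i$, writing $d=f(x'_i,x_{-i})$, non-manipulability via $x'_i$ gives $a\succeq_{x_i}d$, hence $a\succ_{x_i}d$ if $d\ne a$, so $a\succ_{x'_i}d$ by hypothesis; but then non-manipulability at $(x'_i,x_{-i})$ via $x_i$ forbids $a\succ_{x'_i}d$, a contradiction, so $d=a$. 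From MON together with UNM I would then record a Pareto property: if every individual ranks $a$ above $b$ then $f\ne b$ (lift $a$ to the top for everyone without disturbing any relation $b$ dominated; MON keeps the output unchanged while UNM forces it to be $a$, a contradiction unless $f\ne b$). In particular, whenever finitely many alternatives occupy the top positions of every ranking, the output lies among them.

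The core consists of two lemmas about decisive groups. The field extension lemma states that if $G$ is decisive over a single $a\in A$ then $G$ is decisive; I would prove it by taking any profile in which $G$ tops a target $b$, introducing a third alternative $c$ (available since $|A|\ge 3$) to park the dominated options, and transporting the forced outcome from $a$ to $b$ by a combination of the Pareto property and MON. The group contraction lemma states that if $G$ is decisive and $|G|\ge 2$ then some proper nonempty subset of $G$ is decisive. For this I would fix distinct $a,b,c$, partition $G=G_1\sqcup G_2$ into nonempty parts, write $J=I\setminus G$, and examine the cyclic profile $P$ given by $a\succ b\succ c$ on $G_1$, by $c\succ a\succ b$ on $G_2$, and by $b\succ c\succ a$ on $J$, with a fixed tail appended below for everyone. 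Pareto gives $f(P)\in\{a,b,c\}$. Comparing $P$ with the profile in which $G_2$ is lifted to top $a$---where decisiveness of $G$ over $a$ forces the output $a$ irrespective of $J$---and sliding $a$ back down through the members of $G_2$ one coordinate at a time, the single-coordinate inequalities confine each change to $\{a,c\}$, so $f(P)\in\{a,c\}$. If $f(P)=a$ I would argue that $G_1$ is decisive over $a$, and if $f(P)=c$ that $G_2$ is decisive over $c$; in either case the field extension lemma upgrades this to full decisiveness of a proper subgroup.

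The main obstacle is the group contraction lemma, specifically upgrading ``the output is $a$ at the single witnessing profile $P$'' to ``$G_1$ is decisive over $a$ at \emph{every} profile,'' since the non-members may rank $a$ arbitrarily low while MON only propagates an outcome when the winner is not lowered. Handling this is where the construction must be engineered so that, after pinning the outcome at one profile via $G$'s decisiveness, MON and the Pareto property (through the auxiliary alternative $c$) can sweep over all admissible positions of $a$ for the outsiders; this is the step I expect to require the most care. Granting the two lemmas, the theorem follows quickly: by UNM the whole group $I$ is decisive, and since $I$ is finite, repeated application of the contraction lemma produces a decisive singleton, that is, a dictator, which is exactly DT.
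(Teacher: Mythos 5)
Your derivation of MON from STP is correct (the one-coordinate argument is exactly right), the Pareto consequence is correct, the trivial ``if'' direction is fine, and the overall architecture (extension lemma plus contraction lemma, iterated from UNM) is the same Sen-style skeleton the paper uses. Indeed, MON would even streamline part of the paper's argument: if $f(x^{*})=a$ at a single profile where $G$ tops $a$ and every outsider has $a$ at the \emph{bottom}, then MON applied in one step (the outsiders impose vacuous conditions, since $a$ beats nothing in their rankings) yields $f=a$ at every profile where $G$ tops $a$. That is precisely the paper's profile-specific Extension Lemma, obtained more cleanly than by the paper's coordinate-by-coordinate argument.

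However, both of your core lemmas have genuine gaps, and they are the same gap: your Pareto-based substitute for the paper's Tops-only Lemma (Lemma \ref{basic}) is strictly too weak. Your property only applies when a set $B$ occupies the top $|B|$ positions of \emph{every} ranking; the paper's Lemma \ref{basic} requires only that each voter's single top alternative lies in $\{a,b\}$, with the other element of the pair allowed to sit arbitrarily low. The distinction is fatal at exactly the pivotal profile of the field extension argument: after swapping $a$ and $b$ for the members of $G$, one sits at a profile where $G$ has $b\succ a\succ\dots$ and the outsiders have $c\succ\dots\succ b$ (with $b$ at the bottom, as it must be if you later want unconditional decisiveness over $b$ via MON's bottom trick). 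STP confines the outcome to $\{a,b\}$, but no alternative Pareto-dominates $a$ there ($G$ ranks only $b$ above $a$, the outsiders do not), so your tools cannot rule out $a$; the paper rules it out because the tops lie in $\{b,c\}$ and Lemma \ref{basic} forces $f\in\{b,c\}$. If instead you park $b$ just above $a$ for the outsiders so that Pareto does eliminate $a$, MON then only delivers the conditional statement ``$f=b$ whenever $G$ tops $b$ and outsiders rank $b$ above $a$,'' and this conditioning cannot be bootstrapped away. The same problem sinks the contraction step you yourself flag as the main obstacle: in your cyclic profile $P$, when $f(P)=a$ the members of $G_2$ rank $a$ in the \emph{middle}, so neither the bottom trick nor any MON argument upgrades the single outcome to decisiveness of $G_1$ over $a$; this is not a detail requiring ``care'' but the actual mathematical content of the theorem, which is why the paper abandons cyclic profiles entirely and instead works with two-alternative profiles (where Lemma \ref{basic} applies) followed by a long engineered sequence of moves driving $a$ to the bottom of every ranking except individual $1$'s. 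In short: your proposal is missing the Tops-only Lemma, and both of your lemma sketches fail at precisely the step where the paper invokes it.
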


\section{The proof}
\label{proof}
From now on we assume $f:X^{N}\rightarrow A$ is UNM and STP, and we shall prove three subsequent lemmas.
\begin{lemma}[Tops only]
\label{basic}
Let $x\in X^{N}$ and $a,b\in A$ be such that the top ranked alternative in $x_{i}$ is in $\{a,b\}$ for all $i\in I$. Then, $f(x)\in \{a, b\}$.
\end{lemma}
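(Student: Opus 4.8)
The plan is to distill two structural consequences of strategy-proofness and unanimity and then reduce the claim to a profile on which unanimity bites directly. The first ingredient is a single-agent \emph{monotonicity} principle: if $f(z)=c$ and $z'$ is obtained from $z$ by having one agent $i$ replace $z_i$ by any $z_i'$ in which the set of alternatives ranked below $c$ only grows (i.e. $c\succ_{z_i}d\Rightarrow c\succ_{z_i'}d$ for all $d$), then $f(z')=c$. This drops out of the two non-manipulation inequalities — one at $z$ with deviation $z_i'$, one at $z'$ with deviation $z_i$ — since if the outcome changed, the strict comparison between the two outcomes would have to disagree across $z_i$ and $z_i'$, which the hypothesis forbids; chaining over agents yields the many-agent version. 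The second ingredient is \emph{weak Pareto}: if $\alpha\succ_{x_i}\beta$ for every $i$, then $f(x)\neq\beta$. I would prove this by starting from the profile in which everyone ranks $\alpha$ on top (outcome $\alpha$ by unanimity) and deforming it to $x$ one agent at a time, each step only moving $\alpha$ down to its position in $x_i$; the two non-manipulation inequalities force that the outcome can jump to $\beta$ only if $\alpha$ is ranked below $\beta$ at the agent being changed, contradicting $\alpha\succ_{x_i}\beta$.

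With these tools, the natural reduction is to pass to the profile $x^{ab}$ obtained from $x$ by pulling the pair $\{a,b\}$ into the top two positions of every agent, keeping whichever of $a,b$ was already on top on top. In $x^{ab}$ every alternative outside $\{a,b\}$ sits below both $a$ and $b$ for all agents, so weak Pareto gives $f(x^{ab})\in\{a,b\}$ at once.

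The hard part — and, I suspect, the real content of the lemma — is transferring this back to $x$ itself, i.e. excluding $f(x)=c\notin\{a,b\}$. The difficulty is that moving between $x$ and $x^{ab}$ lowers the non-top element of $\{a,b\}$ past the "middle" alternatives, and strategy-proofness \emph{alone} permits the outcome to follow such a middle alternative out of $\{a,b\}$: the single-deviation inequalities are entirely consistent with the winner sitting at $c$ and only jumping into $\{a,b\}$ at some pivotal agent, as one sees by inspecting that agent's option set $\{f(x_i',x_{-i}):x_i'\in X\}$, which may legitimately contain $c$. So no purely local shuffling of deviations can finish the proof, and this is exactly where I expect the main work to lie.

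To close this gap I would argue by contradiction: assume $f(x)=c$, use monotonicity to raise $c$ to the second position for everyone (preserving $f=c$), and then induct on the number of agents whose top is $b$, reducing that count by flipping one such agent to top $a$. The induction hypothesis controls the flipped profile, and I would process the agents in an order ensuring that at every intermediate profile one fixed element of $\{a,b\}$ dominates all middle alternatives for all agents, so that weak Pareto excludes them from the pivotal agent's option set and forces that agent's top-in-menu to lie in $\{a,b\}$. Making this exclusion airtight — showing the option set meets $\{a,b\}$ and contains nothing the agent ranks above it outside $\{a,b\}$ — is the crux, and the step I would budget the most care for.
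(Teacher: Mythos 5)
Your two preliminary ingredients are sound: the monotonicity principle and weak Pareto both follow from STP and UNM essentially as you sketch, and they do give $f(x^{ab})\in\{a,b\}$ for the compressed profile. The genuine gap is the step you yourself flag as the crux, and the mechanism you propose for it cannot work. First, weak Pareto cannot ``exclude middle alternatives from the pivotal agent's option set'': the option set $\{f(x_i',x_{-i}):x_i'\in X\}$ ranges over \emph{all} deviations of agent $i$, including those that place a middle alternative $c$ on top, and at such deviation profiles no unanimous domination of $c$ holds, so weak Pareto is silent. Indeed, if $c$ belongs to the option set at all, then agent $i$ reporting $c$ on top obtains it (STP forces $f$ to select $i$'s best element of the menu), so no argument applied at \emph{other} profiles of that family can remove it. Second, the domination precondition you want at intermediate profiles is incompatible with your own normalization: monotonicity preserves the outcome $c$ only when $c$'s lower contour sets grow, so to simplify the profile while keeping $f=c$ you must raise $c$ to second place for everyone; but then every top-$b$ agent ranks $c$ above $a$ and every top-$a$ agent ranks $c$ above $b$, so neither $a$ nor $b$ dominates $c$ at any profile where both tops are present (and if only one top is present, UNM finishes immediately). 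Consequently your induction on the number of top-$b$ agents stalls exactly in the case it was designed for: flipping a top-$b$ agent $i$ to a top-$a$ ranking yields, by the inductive hypothesis, an outcome in $\{a,b\}$, but both STP inequalities are consistent with $f(z)=c$, $c\succ_{z_i}a$, and the flipped outcome being $a$ --- no contradiction arises.

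What the paper uses instead, and what your outline is missing, is a \emph{two-path} consistency argument rather than any single ordered sequence of flips. It connects the extreme profile (your $x^{ab}$, with $a,b$ in everyone's top two positions) to $x$ along two different one-agent-at-a-time paths: on one path the top-$a$ agents revert to their true rankings first, on the other the top-$b$ agents do. Along either path the outcome stays in $\{a,b\}$; proving this is itself not purely local but uses auxiliary chains terminated by UNM (if the outcome ever left $\{a,b\}$, raising $a$ to the top of the still-extreme agents one at a time keeps the outcome outside $\{a,b\}$ by STP, contradicting UNM at the end). The decisive trick is then to compare the endpoints of the two paths: at least one path must end with the outcome equal to the common top of the agents not yet reverted on that path, because if both ended unfavorably, reversing each path would pin $f(x^{ab})$ at two different values by STP. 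On the favorable path, STP alone carries that outcome unchanged through the remaining reversions, since each remaining agent keeps the current outcome as his top alternative in both his old and new rankings; this yields $f(x)\in\{a,b\}$. Some device of this kind --- deriving information at $x$ from the \emph{joint} behavior of $f$ on two deformation paths --- is the missing idea; no ordering of single flips combined with weak Pareto will substitute for it.
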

\begin{proof}
By UNM, we may assume that $a,b\in A$ are distinct. Let $G(a,x)=\{i\in I: x_{i} \text{ ranks $a$ as the top}\}$ and $G(b,x)=\{j\in I: x_{j} \text{ ranks $b$ as the top}\}$. Without losing generality, we may assume that $G(a,x)=\{1, ..., k\}$ and $G(b,x)=\{k+1, ..., N\}$ for some $k\leq N$.\footnote{The reader should check that our argument works if we instead had $G(a,x)=\{i_{1}, ...,i_{k}\}$ and $G(b,x)=\{i_{k+1}, ..., i_{N}\}$.} Consider $x'\in X^{N}$ such that $x'_{i}=(a\succ b\succ ...)$ for all $i\in G(a, x)$, and $x'_{j}=(b\succ a\succ ...)$ for all $j\in G(b,x)$. We claim that $f(x')\in\{a,b\}$. To see this, suppose $f(x')\notin \{a,b\}$. Transform $x'\in X^{N}$ by reversing the positions of $a,b\in A$ in $x'_{i}$ for all $i\in G(a,x)$, one at a time. Let $x^{i}$ be the resulting profile after $x'_{1}$, ..., $x'_{i}$ are changed, and we set $x^{0}=x'$. Then, $f(x^{i})\notin\{a,b\}$ for all $i\in G(a, x)$, since
\begin{itemize}
\item $f(x^{0})\notin\{a,b\}$, and
\item if $f(x^{i})\notin\{a,b\}$, but $f(x^{i+1})\in \{a,b\}$ for some $i\in \{0, ..., k-1\}$, then $f$ is MNP at $x^{i}\in X^{N}$ by individual $i+1$ via $x'_{i+1}$.
\end{itemize}
In particular, $f(x^{k})\notin\{a,b\}$, which then contradicts to UNM, and this proves our claim.

Transform $x'\in X^{N}$ by replacing $x'_{i}$ with $x_{i}$ for all $i\in G(a,x)$, one at a time. Let $y^{i}\in X^{N}$ be the profile obtained after $x'_{1}, ..., x'_{i}$ are replaced, and we set $y^{0}=x'$. Then, $f(y^{i})\in \{a,b\}$ for all $i\in G(a,x)$, since 
\begin{itemize}
\item $f(y^{0})\in\{a,b\}$, and 
\item if $f(y^{i})\in\{a,b\}$ then $f(y^{i+1})\in \{a,b\}$ for all $i\in \{0, ..., k-1\}$. To see this, suppose for some $i\in \{0, ..., k-1\}$, $f(y^{i})\in\{a,b\}$ but $f(y^{i+1})\notin \{a,b\}$. Start with $y^{i+1}\in X^{N}$ and transform everyone's preferences across $\{k+1, ..., N\}$ by bringing $a\in A$ to the top. Then STP ensures that social choice is never in $\{a,b\}$, which then eventually contradicts to UNM after transforming the individual $N'$s preferences.
\end{itemize}
In particular, $f(y^{k})\in \{a,b\}$. Now transform $x'\in X^{N}$ by replacing $x'_{j}$ with $x_{j}$ for all $j\in G(b,x)$, one at a time. Let $z^{j}\in X^{N}$ be the profile obtained after $x'_{k+1}, ..., x'_{j}$ are replaced, and we set $z^{k}=x'$. Then, $f(z^{j})\in \{a,b\}$ for all $j\in G(b,x)$, since 
\begin{itemize}
\item $f(z^{k})\in\{a,b\}$, and 
\item if $f(z^{j})\in\{a,b\}$ then $f(z^{j+1})\in \{a,b\}$ for all $j\in \{k, ..., N-1\}$. To see this suppose for some $j\in \{k, ..., N-1\}$, $f(z^{j})\in\{a,b\}$ but $f(z^{j+1})\notin \{a,b\}$. Start with $z^{j+1}\in X^{N}$ and transform everyone's preferences across $\{1, ..., k\}$ by bringing $b\in A$ to the top. Then STP ensures that social choice is never in $\{a,b\}$, which then eventually contradicts to UNM after transforming the individual  $k'$s preferences.
\end{itemize}
In particular, $f(z^{N})\in \{a,b\}$. We then claim that exactly one of the following two statements holds true: 
\begin{itemize}
\item[(a)] $f(y^{k})=b$, or
\item[(b)] $f(z^{N})=a$.
\end{itemize}
To see this, assume none of them holds true. Then, since both $f(y^{k})$ and $f(z^{N})$ are in $\{a,b\}$, we have $f(y^{k})=a$ and $f(z^{N})=b$. Transform $y^{k}\in X^{N}$ back to $x'\in X^{N}$ by reversing the above procedure. Then STP ensures that social choice remains at $a\in A$ throughout this transformation, in particular $f(x')=a$. Similarly, transform $z^{N}\in X^{N}$ back to $x'\in X^{N}$. Again, STP ensures that social choice remains at $b\in A$ throughout this transformation, in particular $f(x')=b$, which is a contradiction as we already concluded that $f(x')=a$ and $a, b\in A$ are different alternatives. Thus, at least one of the two statements must be true. However, with a very similar argument one can also show that the two statements in our claim can not be true at the same time. This proves our claim.

To complete the proof of Lemma \ref{basic}, assume $f(y^{k})=b$ and transform $y^{k}\in X^{N}$ into $x\in X^{N}$ by changing preferences of the individuals in $G(b,x)$ into their preferences in $x\in X^{N}$, one at a time. Then, STP ensures that social choice remains at $b\in A$ throughout this transformation. In particular, $f(x)=b$. If instead we had $f(z^{N})=a$, then we can show that $f(x)=a$ with a similar argument. Thus, in either case, $f(x)\in \{a,b\}$.
\end{proof}

\begin{lemma}[Extension]
\label{field} 
Let $G\subseteq I$ and $x\in X^{N}$ be such that $a\in A$ is ranked at the top of $x_{i}$ for all $i\in G$, and at the bottom of $x_{j}$ for all $j\in I\setminus G$. If $f(x)=a$, then G is decisive. 
\end{lemma}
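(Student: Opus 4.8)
The plan is to extract from strategy-proofness a monotonicity property, use it to upgrade the single instance $f(x)=a$ into decisiveness of $G$ over $a$, and then transfer this decisiveness to every other alternative with the help of Lemma \ref{basic} and a third alternative.

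First I would record a \emph{monotonicity} consequence of STP: if $f(w)=s$ and $w'$ is obtained from $w$ by changing agents' rankings so that no agent ever lowers $s$ relative to any other alternative (so each set $\{c : s \succ_{w_i} c\}$ only grows), then $f(w')=s$. This follows by processing one agent at a time: if a single agent's change preserved the position of $s$ against everything yet flipped the outcome to some $t\neq s$, then applying STP at each of the two profiles would force both $s\succ t$ and $t\succ s$ for that agent, a contradiction. A useful special case is that swapping two adjacent alternatives, both different from the current outcome, never changes the outcome.

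Next, Step~1: $G$ is decisive over $a$. Starting from the hypothesised profile $x$, in which $a$ is top for every $i\in G$ and bottom for every $j\in I\setminus G$, take any profile $y$ in which $a$ is top for all $i\in G$. Passing from $x$ to $y$, the members of $G$ keep $a$ on top while the members of $I\setminus G$ can only raise $a$ from the bottom, so $a$ is never lowered; monotonicity gives $f(y)=a$. Hence $G$ is decisive over $a$. The very same argument shows that, to prove decisiveness over an arbitrary alternative $c$, it suffices to exhibit a single profile in which $c$ is top for $G$, $c$ is bottom for $I\setminus G$, and the outcome is $c$, since then any profile with $c$ on top for $G$ is reached from it by raising $c$ only.

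Finally, Step~2: transfer decisiveness from $a$ to an arbitrary $c\neq a$. Fix a third alternative $b\notin\{a,c\}$ (possible since $|A|\geq 3$) and form the target profile $z$ in which $c$ is top for every $i\in G$ with $a$ placed just below it, and $c$ is bottom for every $j\in I\setminus G$ with $a$ on top. By Lemma \ref{basic} the outcome $f(z)$ lies in $\{a,c\}$, so everything rests on ruling out $f(z)=a$. My plan is to connect the ``$a$ versus $c$'' contest in $z$ to a contest already controlled by Step~1: sliding $b$ and $a$ through the rankings of $I\setminus G$ in a sequence of steps, each step being either a swap of two non-outcome alternatives (outcome-preserving by monotonicity) or a change of a single agent's top within a two-element set (so Lemma \ref{basic} confines the outcome), until one reaches a profile in which $a$ is top for all of $G$, where Step~1 forces the outcome to be $a$; reading the chain backwards then pins $f(z)=c$. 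The hard part will be exactly this transfer, because the natural intermediate profiles exhibit three distinct tops, whereas Lemma \ref{basic} only constrains the outcome when the tops lie in a two-element set. The crux is therefore to orchestrate the morphing so that at every stage the outcome is either protected by the swap-invariance form of monotonicity or the tops genuinely collapse into a pair; in effect this amounts to a neutrality statement — that $G$'s winning power is the same in the pair $\{a,\cdot\}$ as in the pair $\{c,\cdot\}$ — and it is this neutrality, rather than any single manipulation argument, that does the real work.
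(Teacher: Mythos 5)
Your monotonicity lemma and Step 1 are correct: the lower-contour-set argument is the standard consequence of STP, and it yields decisiveness of $G$ over $a$ cleanly (the paper proves the same statement by one-at-a-time manipulation arguments, so up to this point you match it, if anything more efficiently). The gap is Step 2, and it is genuine: you never rule out $f(z)=a$, and the toolkit you allow yourself cannot do it. In your profile $z$ the agents outside $G$ have $a$ on top, so Lemma \ref{basic} confines $f(z)$ to $\{a,c\}$, which is exactly the pair you need to break. Your proposed chain also fails for a structural reason: its terminus is a profile where Step 1 forces the outcome to be $a$, and a terminus whose outcome is $a$ can never produce a contradiction with the hypothesis $f(z)=a$; every step of a chain that merely ``preserves or confines'' the outcome is consistent with the outcome being $a$ all along. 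Nor can the chain make progress: swaps of non-outcome pairs can never dethrone $a$ from the top of the rankings outside $G$ (under the hypothesis $f(z)=a$, the pair being swapped must exclude $a$), while a Lemma \ref{basic} step only confines the outcome to a pair still containing $a$, losing exactly the information you need. The ``neutrality statement'' you defer to is not a repackaging of the difficulty; it is the unproven content.

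The paper closes precisely this gap with one more manipulation idea, which your special case of monotonicity excludes: swapping an adjacent pair that \emph{contains} the current outcome. Choose the connecting profile so that each $i\in G$ has $(a\succ c\succ\dots)$ and each $j\in I\setminus G$ has $(b\succ\dots\succ c)$, i.e.\ the third alternative $b$ on top and the target $c$ at the bottom. Step 1 pins the outcome at $a$. Now swap the adjacent pair $a,c$ in the rankings of the members of $G$, one at a time: an agent performing the swap has both $a$ and $c$ in the top two positions, so if the outcome ever escaped $\{a,c\}$ that agent could profitably deviate back, contradicting STP. Thus the outcome stays trapped in $\{a,c\}$ even though the intermediate profiles have three distinct tops --- no neutrality is needed, only this single manipulation argument. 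At the terminus the tops are $c$ (for $G$) and $b$ (for $I\setminus G$), so Lemma \ref{basic} forces the outcome into $\{b,c\}$, hence it equals $c$; and since $c$ is now top for all of $G$ and bottom for all of $I\setminus G$, your own Step 1 remark upgrades this single profile to decisiveness of $G$ over $c$.
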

\begin{proof}
As above, we may assume that $G=\{1,..., k\}$ with $k\leq N$. We first show that $G$ is decisive over $a\in A$. Let $x'\in X^{N}$ be such that $a\in A$ is at the top of $x'_{i}$ for $i\in G$. Transform $x\in X^{N}$ into $x'\in X^{N}$ by replacing $x_{i}$ with $x'_{i}$, for $i=1,2, ..., N$, one at a time. Let $x^{i}\in X^{N}$ be the profile obtained after changing $x_{1}, ..., x_{i}$, and we set $x^{0}=x$. Notice that $f(x^{i})=a$ for $i=1,..., k$, since
\begin{itemize}
\item $f(x^{0})=a$, and 
\item if $f(x^{i})=a$, but $f(x^{i+1})\neq a$ for some $i\in \{0, ..., k-1\}$, then $f$ is MNP by individual $i+1$ at $x^{i+1}$ via $x_{i+1}$. 
\end{itemize}
Notice also that $f(x^{j})=a$ for $j=k+1,..., N$, since 
\begin{itemize}
\item $f(x^{k})=a$, and 
\item if $f(x^{j})=a$, but $f(x^{j+1})\neq a$ for some $j\in \{k,..., N-1\}$, then $f$ is MNP by individual $j+1$ at $x^{j}$ via $x'_{j+1}$. 
\end{itemize}
We showed that $f(x')=f(x^{N})=a$ and hence, $G$ is decisive over $a$.

We next show that $G$ is decisive over any $b\in A\setminus \{a\}$. Let $y\in X^{N}$ be a profile where $y_{i}=(a\succ b\succ ...)$ for all $i\in G$, and $y_{j}=(c\succ ...\succ b)$ for all $j\in I\setminus G$, for some $c\in A\setminus\{a,b\}$. Then, $f(y)=a$ as $G$ is decisive over $a\in A$. Change the positions of $a,b\in A$ in $y_{i}$ for $i=1,2, ..., k$, one at a time. Let $y^{i}\in X^{N}$ be the profile obtained after changing $y_{1}, ..., y_{i}$, and we set $y^{0}=y$. Notice that $f(y^{i})\in \{a, b\}$ for $i=1,..., k$, since 
\begin{itemize}
\item $f(y^{0})=a\in \{a,b\}$, and 
\item if $f(y^{i})\in \{a,b\}$, but $f(y^{i+1})\notin \{a,b\}$ for some $i\in \{0, ..., k-1\}$, then $f$ is MNP by individual $i+1$ at $y^{i+1}\in X^{N}$ via $y_{i+1}$. 
\end{itemize}
Then, $f(y^{k})\in \{a, b\}$, but by Lemma \ref{basic}, $f(y^{k})\in \{b,c\}$. Thus, $f(y^{k})=b$ and we can repeat the argument above to show that $G$ is decisive over $b$. Thus, $G$ is decisive.
\end{proof}

\begin{lemma}[Contraction]
\label{group} 
If a group $G\subseteq I$ with $|G|\geq2$ is decisive, then it has a proper subset which is decisive.
\end{lemma}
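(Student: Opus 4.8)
The plan is to adapt Sen's group contraction argument (the companion of his field expansion lemma) to the present setting, with the Extension lemma (Lemma \ref{field}) playing the role of field expansion. Fix a partition $G=G_{1}\cup G_{2}$ into two nonempty disjoint subsets, and choose three distinct alternatives $a,b,c\in A$ (possible since $|A|\geq 3$). I would consider the single ``Condorcet cycle'' profile $x\in X^{N}$ in which $x_{i}=(a\succ b\succ c\succ\cdots)$ for $i\in G_{1}$, $x_{j}=(c\succ a\succ b\succ\cdots)$ for $j\in G_{2}$, and $x_{l}=(b\succ c\succ a\succ\cdots)$ for $l\in I\setminus G$, all sharing a common tail below $c$. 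The aim is to force the value $f(x)$ to drive one of $G_{1},G_{2}$ into the exact hypothesis of Lemma \ref{field}.

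First I would pin down $f(x)\in\{a,c\}$. Starting from the auxiliary profile in which every member of $G_{2}$ has instead brought $a$ to the top (so $a$ is on top throughout all of $G$), decisiveness of $G$ gives $f=a$; then reverting the members of $G_{2}$ one at a time by swapping $a$ and $c$ in their top two positions keeps the outcome inside $\{a,c\}$ by a short strategy-proofness argument (a voter whose top two coordinates are $\{a,c\}$ cannot move the outcome outside $\{a,c\}$ without a profitable deviation). Hence $f(x)\in\{a,c\}$.

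Next I would split into two mirror-image cases. If $f(x)=a$, I would raise $c$ above $b$ in the top two positions of every $l\in I\setminus G$; since $a$ sits below both $b$ and $c$ for these voters, strategy-proofness forces the outcome to remain at $a$. Now $G_{1}$ has $a$ on top while everyone outside $G_{1}$ has $c$ on top, so by Lemma \ref{basic} the outcome stays in $\{a,c\}$ as I push $a$ down to the very bottom for each non-member of $G_{1}$ one swap at a time; a switch to $c$ is excluded by strategy-proofness because $c$ remains on top for the mover, so $f$ stays at $a$. This reaches precisely the hypothesis of Lemma \ref{field} for $G_{1}$, making $G_{1}$ decisive. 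If instead $f(x)=c$, the symmetric construction (raising $b$ above $a$ for every member of $G_{1}$, then pushing $c$ to the bottom for all voters outside $G_{2}$) shows $G_{2}$ is decisive. In either case $G$ has a decisive proper subset.

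The main obstacle I anticipate is the complement $I\setminus G$ together with the fact that Lemma \ref{field} demands the winning alternative at the very bottom for every non-member, which is strictly stronger than merely ranking it below its rival. A two-alternative profile does not suffice: when the outcome lands on the complement's favored alternative, one only recovers decisiveness of a set that still contains $I\setminus G$, and strategy-proofness alone cannot exclude this degenerate case. The three-alternative cyclic arrangement is exactly what places the complement on the losing side of the decisive comparison in both cases, and the delicate part of the write-up will be the repeated strategy-proofness ``monotonicity'' steps, justified through top-two swaps together with Lemma \ref{basic}, that move the winner to the bottom for all non-members while certifying that the social choice never drifts off to a third alternative.
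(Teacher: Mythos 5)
Your proposal is correct, and it takes a genuinely different route from the paper. The paper splits off a single voter: it builds a \emph{two}-alternative profile in which voter $1$ and the complement $I\setminus G$ rank $a$ on top and $b$ at the bottom, while $G\setminus\{1\}$ ranks $b$ on top and $a$ at the bottom, so Lemma \ref{basic} immediately gives $f(x)\in\{a,b\}$. The case $f(x)=b$ hands $G\setminus\{1\}$ to Lemma \ref{field} for free, but the case $f(x)=a$ --- precisely the degenerate situation you flag, where the winning alternative is also the complement's favorite --- is then handled by a long chain of one-voter-at-a-time, STP-preserving transformations (introducing a third alternative $c$ midway) that isolates voter $1$ with $a$ on top and drives $a$ to the bottom for everyone else. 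Your proof instead works with an arbitrary partition $G=G_1\cup G_2$ and a three-alternative Condorcet-cycle profile, which makes the two cases mirror images: whichever of $a,c$ wins, the complement is already on the losing side, and the same massaging (adjacent swaps justified by STP, plus Lemma \ref{basic} to exclude drift to a third alternative) lands directly in the hypothesis of Lemma \ref{field} for $G_1$ or for $G_2$. The price you pay is the preliminary step pinning $f(x)\in\{a,c\}$: since the cyclic profile has three distinct tops, Lemma \ref{basic} does not apply to it, and you correctly substitute decisiveness of $G$ plus top-two swap arguments. The payoffs are symmetry of the case analysis and fidelity to Sen's original group contraction lemma, which likewise rests on the cyclic profile; the paper's asymmetric construction buys one case outright at the cost of a much longer second case. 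All of your individual steps check out: the top-two swap argument, the claim that an adjacent swap of $b,c$ cannot dislodge an outcome of $a$, and the combination of Lemma \ref{basic} with STP to keep the outcome fixed while the winner is pushed to the bottom for all non-members are each sound manipulability arguments.
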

\begin{proof}
We may again assume that $G=\{1, ..., k\}$ with $k\leq N$. Let $x\in X^{N}$ be a profile with $x_{1}=(a\succ ...\succ b)$, and for $2\leq i\leq k$, $x_{i}=(b\succ ... \succ a)$, and for all $k+1\leq j\leq N$, $x_{j}=(a\succ...\succ b)$. Then, by Lemma \ref{basic}, $f(x)\in\{a,b\}$. If $f(x)=b$ then we found $\{2, ..., k\}\subsetneq G$ which is decisive by Lemma \ref{field}. 

Assume $f(x)=a$ and we show that $\{1\}$ is decisive. Take $c\in A\setminus\{a,b\}$ and let $x^{1}\in X^{N}$ be the profile we obtained from $x\in X^{N}$ by changing $x_{1}$ with $x'_{1}=(a\succ b\succ ...\succ c)$. Then, $f(x^{1})=a$ as otherwise $f$ is MNP by individual $1$ at $x^{1}\in X^{N}$ via $x_{1}$. 
Let us start with $x^{1}\in X^{N}$ and change its $j'$th coordinate as $x'_{j}=(c\succ ...\succ b)$ for all $j\in \{k+1, ..., N\}$, one at a time. Let $x^{j}\in X^{N}$ be resulting profile after $x_{k+1}, ..., x_{j}$ are changed, and we set $x^{k}=x^{1}$. Then, $f(x^{j})=a$ for $j=k+1,..., N$, since 
\begin{itemize}
\item $f(x^{k})=a$,
\item $f(x^{j})\in\{a,b\}$ for all $j\in I\setminus G$, as otherwise individual $1$ can manipulate $f$ at $x^{j}\in X^{N}$ by reporting $b\in A$ at the top (recall that $G$ is decisive), and
\item if $f(x^{j})=a$, but $f(x^{j+1})\neq a$ for some $j\in \{k, ..., N-1\}$, then $f(x^{j+1})=b$, and $f$ is MNP by individual $j+1$ at $x^{j+1}$ via $x_{j+1}$.
\end{itemize} 
In particular, $f(x^{N})=a$. Let $y\in X^{N}$ be as $y=(x_{1}, x^{N}_{-1})$, i.e. the profile obtained from $x^{N}\in X^{N}$ after changing its first coordinate back to $x_{1}$. Then, $f(y)=a$ as otherwise $f$ is MNP at $y\in X^{N}$ by individual $1$ via $x'_{1}$. Start with $y\in X^{N}$ and for $2\leq i\leq k$ change $y_{i}=(b\succ ... \succ a)$ as $y'_{i}=(c\succ ... \succ a)$, one at a time. Let $y^{i}\in X^{N}$ be the profile obtained after changing $y_{2}, ..., y_{i}$, and we set $y^{1}=y$. Notice that $f(y^{i})= a$ for $i=2,..., k$, since
\begin{itemize}
\item $f(y^{1})=a$, and 
\item if $f(y^{i})=a$, but $f(y^{i+1})\neq a$ for some $i\in \{1, ..., k-1\}$, then $f$ is MNP by individual $i+1$ at $y^{i}\in X^{N}$ via $y'_{i+1}$.
\end{itemize}
In particular, $f(y^{k})=a$. Finally, start with $y^{k}\in X^{N}$ and for $k+1\leq j\leq N$ change its $j'$th coordinate as $y'_{j}=(c\succ ... \succ a)$, one at a time. Let $y^{j}\in X^{N}$ be the resulting profile after $y_{k+1}, ..., y_{j}$ are changed. Then, $f(y^{j})=a$ for $j=k+1,..., N$, since 
\begin{itemize}
\item $f(y^{k})=a$,
\item $f(y^{j})\in\{a,c\}$ for all $j\in \{k+1,..., N\}$ by Lemma \ref{basic}, and
\item if $f(y^{j})=a$, but $f(y^{j+1})\neq a$ for some $j\in \{k, ..., N-1\}$, then $f(y^{j+1})=c$, and $f$ is MNP by individual $j+1$ at $y^{j}\in X^{N}$ via $y'_{j+1}$.
\end{itemize} 
Thus, $f(y^{N})=a$, and $a\in A$ is ranked as the top by individual $1$, and as the bottom by everybody else. Then Lemma \ref{field} implies that $\{1\}$ is decisive.
\end{proof}

By UNM we know that $I$ is decisive. Then, repeated application of Lemma \ref{group} gives the result in Theorem \ref{GS}.
\section{Final Remarks}
Let us make a few comparisons. Lemma \ref{field} and \ref{group} are counterparts of the field extension and group contraction lemmas in \cite{sen86}, but with a small difference. Lemma \ref{field} states that if a group is decisive over an alternative at a particular profile, then it is globally decisive, whereas Sen's field extension lemma is not profile specific. Another difference between our proof and Sen's proof is Lemma \ref{basic}, which is a non trivial result that the former needs. 

The former difference can be attributed to the fact that the key axiom in Theorem \ref{GS}, strategy proofness, is a local (intra-profile) condition, while as already noted in \cite{fish87} most of the other key axioms such as monotonicity and Arrow's IIA are more global (inter-profile). The latter difference can be explained by different set ups used in stating impossibility results, i.e. social welfare function vs. social choice function. However, one can remove these differences by 
\begin{itemize}
\item Removing Lemma \ref{basic} and incorporating it in the proofs of the other lemmas wherever it is needed, and 
\item Replacing Lemma \ref{field} with the following weaker statement, proof of which is already embedded in the proof of Lemma \ref{field}: {\em If a group is decisive over $a\in A$, then it is decisive}. 
\end{itemize}
Such a change will make the two proofs parallel, but each of the resulting lemmas would have a longer proof. 

Since the differences are syntactical and can be removed, we believe that the above proof is very close to Sen's proof. Moreover, it is one of the shortest among the existing proofs of Theorem \ref{GS}, and it being missing is a surprise. One possible cause of this delay is the emergence of other approaches; in particular, the popular pivotal voter approach (see Sect. \ref{intro}). On the other hand, \cite{mou}, \cite{tay} and \cite{wall} apply variants of this approach to prove Theorem \ref{GS}. But the former two proofs use the Muller-Satterthwaite theorem (see Chap. 10.1 in \cite{mou}, Chap. 7.5 in \cite{tay}), while the latter proof consists of seven lemmas, one of which is stated without a proof (see Chap. 5.5 in \cite{wall}). As such, the current proof is more direct than any of these, and could well be `the missing proof' for an advocate of this approach.

\medskip\noindent {\bf Acknowledgements}: I am thankful to Mitsuyo Shirakawa for the guidance and to an anonymous referee for the remarks.

\end{document}